\newtheorem{theorem}{Theorem}[section]
\newtheorem*{theorem*}{Theorem}
\newtheorem{lemma}[theorem]{Lemma}
\newtheorem*{lemma*}{Lemma}
\newtheorem{corollary}[theorem]{Corollary}
\newtheorem{conjecture}[theorem]{Conjecture}
\theoremstyle{definition}
\newtheorem{definition}[theorem]{Definition}
\newtheorem{example}[theorem]{Example}
\newtheorem{remark}[theorem]{Remark}
\newtheorem*{exercise*}{Exercise}
\def\multiset#1#2{\ensuremath{\left(\kern-.3em\left(\genfrac{}{}{0pt}{}{#1}{#2}\right)\kern-.3em\right)}}
\newcommand{\chain}[1][n]{\langle #1\rangle}
\tikzstyle{vertex}=[circle, draw, inner sep=0pt, minimum size=6pt]
\title{The gold partition conjecture and the Lexicographic sum of posets}
\author[1]{{Eric R}
 {Dolores-Cuenca}$^1$}\email{$^1$Eric.Rubiel@pusan.ac.kr}\address{$^1$ Finance-Fishery-Manufacture industrial mathematics center on Big Data, {Pusan National University}, {{Busan}, {Korea}}}
\author[2]{{Aldo Guzm\'an-S\'aenz$^2$}}\email{$^2$Aldo.Guzman.Saenz@ibm.com}\address{$^2$Thomas J. Watson Research Center, Yorktown Heights, NY, USA}
\author[3]{{Sangil Kim$^3$}}\email{$^3$ Sangil.Kim@pusan.ac.kr}\address{$^3$Deparment of Mathematics, Pusan National University,  {{Busan}, {Korea}}}
\date{}
\begin{document}

\maketitle
\begin{abstract}If a finite poset $Q$ satisfies the Gold Partition Conjecture, and $P$ is a finite poset, then for any $i\in P$ the lexicographic sum $P\circ_iQ$ of $P$ with $Q$ on the point $i$, satisfies the Gold Partition Conjecture. 
\end{abstract}
\section{Introduction}

Consider the category of finite posets with strict morphisms. A morphism $f:P\rightarrow Q$ is strict if for any $x,y\in P$, with $x<_Py$ we have $f(x)<_Qf(y)$. The \emph{$n$-chain}, denoted by $\langle n \rangle$ is the poset $1<2<\cdots<n$. Given a poset $P=(\{p_1,\cdots,p_n\},\leq_P)$, we denote by   $|P|$ its cardinality, and by $L(P)$ the set of  linear orders on $P$ compatible with $\leq_P$.  We call such linear orders \emph{linearizations of $P$} and define $e(P)=|L(P)|$.


The following conjecture was proposed by  Kislitsyn \cite{original}, Fredman \cite{original2}, and Linial \cite{original3}.

\begin{conjecture}[1/3-2/3 Conjecture]
For every finite poset $P$, that is not a linear order, there is a pair of points $x,y$ such that \[\frac{|\{f\in L(P)|x<_f y\}|}{e(P)}\in[1/3,2/3].\]
\end{conjecture}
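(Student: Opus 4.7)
The 1/3-2/3 Conjecture has been open for over forty years, so any ``proof proposal'' here is really a strategic plan rather than a routine verification. My approach would be to avoid attacking the conjecture directly and instead aim to establish the stronger Gold Partition Conjecture (presumably recalled in the next section of the paper), since its conclusion produces a balanced pair whose ratio is pinned inside $[1/3,2/3]$. Once the Gold Partition Conjecture is known for a class $\mathcal{C}$ of posets closed under a sufficiently rich collection of constructions, the 1/3-2/3 Conjecture follows for every poset in $\mathcal{C}$.

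The plan of attack is induction on $|P|$ via modular decomposition. Every finite poset with at least two elements is either (a) \emph{indecomposable} under lexicographic sum, meaning all its modules are trivial, or (b) can be written nontrivially as $P' \circ_i Q$ with $|P'|, |Q| < |P|$. In case (b), the main theorem advertised in the abstract is exactly the inductive step we need: if $Q$ satisfies the Gold Partition Conjecture (true by the inductive hypothesis, since $|Q| < |P|$), then so does $P' \circ_i Q = P$. Thus the entire problem collapses onto case (a), and the task becomes verifying the Gold Partition Conjecture for all indecomposable posets. For special subfamilies one might hope for explicit arguments: for instance chains are linear orders (excluded by hypothesis), antichains factor cleanly, and small-width or small-height indecomposable posets can often be handled by direct counting of linear extensions through sign-reversing involutions on $L(P)$.

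The main obstacle, and the reason the 1/3-2/3 Conjecture has resisted all previous efforts, is precisely the indecomposable base case. For a prime poset there is no submodule on which to recurse, and one must bound $|\{f \in L(P) : x <_f y\}|/e(P)$ using only the global combinatorics of $\leq_P$. A secondary induction on width, height, or number of incomparable pairs is tempting, but each of these invariants can remain constant under removal of elements from a prime poset, and counting linear extensions is $\#P$-hard in general, so no closed-form ratio is available. Realistically, I would not expect to close case (a) in full, and would instead aim to identify the largest class of indecomposable posets for which an extremal argument (likely an entropy or correlation inequality such as the XYZ inequality, or a Kahn--Linial-style spectral bound) produces a pair with the desired balance, then invoke the lexicographic sum theorem to propagate the result as widely as possible.
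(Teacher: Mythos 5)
The statement you were asked to prove is the 1/3--2/3 Conjecture itself, which the paper only \emph{states} as a conjecture --- it offers no proof, and the problem remains open. Your proposal is candid about this, and as such it is not a proof: everything is concentrated in your case (a), the indecomposable (prime) posets, which you explicitly leave open. Since the lexicographic-sum lemma only transfers the property from a module $Q$ to $P\circ_i Q$, your reduction discharges none of the prime instances, and those are essentially all the hard ones; the gap is therefore not a technical detail but the full strength of the conjecture. That said, the reduction you outline is precisely the paper's own point of view: Lemma~\ref{Lemma:main} is the inductive step you describe (the GPC is preserved by $P\circ_i Q$), Corollary~\ref{delta} gives the analogous transfer for the $1/3$--$2/3$ property itself, and the final remarks record exactly your conclusion that it would suffice to verify the GPC for all indecomposable posets.

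There is also a smaller flaw in the induction as you set it up: in case (b) the nontrivial module $Q$ may be a chain. For example, $P=\{a<b\}\sqcup\{c\}$ is decomposable, but its only proper module with more than one point is the chain $\{a,b\}$. A chain satisfies the GPC only vacuously, while the proof of Lemma~\ref{Lemma:main} needs two genuine comparisons inside $Q$ with $t_0>t_1+t_2$ to transport to $P\circ_i Q$; so the inductive step does not apply when every nontrivial module is a chain, and such decomposable posets would need a separate argument (for instance comparisons chosen in the quotient, which the paper's lemma does not provide). Hence even granting the prime base case, the claim that ``the entire problem collapses onto case (a)'' is not quite accurate as stated.
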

The paper~\cite{xprod} states that the 1/3-2/3 conjecture is one of the most intriguing problems in the combinatorial theory of posets, see also~\cite{history, prob}.

The conjecture has been proven for the following families: posets of width 2~\cite{original3}, posets with a non trivial automorphism~\cite{symmetry}, semiorders~\cite{semiorders}, height 2 posets~\cite{Height2},  5-thin posets~\cite{five}, posets containing at most 11 points~\cite{gold}, 6-thin posets~\cite{6thin}, series-parallel posets~\cite{Nfree},  
 Young diagrams~\cite{diagrams},  and posets whose cover graph is a forest~\cite{graphforest}.

Marcin Peczarski~\cite{gold} proposed the following conjecture, which implies the 1/3-2/3 conjecture: \begin{conjecture}[Gold Partition Conjecture (GPC)]
For every poset $P$, which is not a chain, there are two consecutive comparisons such that regardless of their results, the following inequality holds $$t_0>t_1+t_2,$$ where $t_0=e(P)$,  $t_1$ is the number of linearizations of the poset after the first comparison, and $t_2$ is the number of linearizations after both comparisons. 
\end{conjecture}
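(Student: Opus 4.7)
The plan is to proceed by strong induction on $|P|$, combining the verified cases from the literature with a structural case analysis. The base of the induction is provided by the results cited above: posets of at most $11$ points \cite{gold}, width-$2$ posets \cite{original3}, height-$2$ posets \cite{Height2}, semiorders \cite{semiorders}, $5$- and $6$-thin posets \cite{five,6thin}, series-parallel posets \cite{Nfree}, Young diagrams \cite{diagrams}, and forests \cite{graphforest}. In the inductive step one may therefore assume $|P|>11$ and that $P$ avoids each of these families.

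The first substantive step I would take is a decomposition dichotomy. If $P$ admits a nontrivial lexicographic decomposition $P\cong P'\circ_i Q$ with $|Q|\geq 2$ and $|P'|\geq 2$, then by the inductive hypothesis $Q$ satisfies GPC (or $Q$ is a chain, which I would treat separately using that chains impose trivial constraints), and the main theorem of the present paper transfers GPC from $Q$ to $P$. Disjoint unions and ordinal sums, being special instances of lexicographic sums over a two-element antichain or two-chain respectively, are handled the same way. This reduces the problem to \emph{prime} posets, those admitting no nontrivial lexicographic decomposition.

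For prime $P$ the plan is to select a candidate incomparable pair $x,y$ minimizing $|p_{xy}-1/2|$, where $p_{xy}=|\{f\in L(P):x<_f y\}|/e(P)$, and then to locate a second comparison among pairs adjacent to $\{x,y\}$ whose two outcomes are each sufficiently balanced conditional on the first. The inequality $t_0>t_1+t_2$ is equivalent to a bound on the worst conditional linearization count after the second comparison, so the task becomes a joint anti-concentration statement for two coupled comparisons in $L(P)$. A natural tool here is a correlation or log-concavity inequality on $L(P)$ in the spirit of the FKG--Ahlswede--Daykin machinery underlying the Stanley inequalities for order polytopes.

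The main obstacle, and the reason the conjecture has resisted proof since its formulation in \cite{gold}, lies entirely in this prime case: no inductive descent through lexicographic decomposition is available, and the existing single-comparison estimates do not iterate to control the conditional distribution after one comparison has been fixed. I would expect the bulk of the work to go into ruling out the possibility that every candidate second comparison is simultaneously unbalanced in \emph{both} branches after the first. Without a genuinely new joint estimate on pairs of order statistics in $L(P)$, the induction stalls precisely at prime posets of size exceeding the computational reach of \cite{gold}, which is why the present paper settles for the preservation statement of the abstract rather than the full conjecture.
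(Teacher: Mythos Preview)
The statement you are attempting to prove is recorded in the paper as a \emph{conjecture}, not a theorem; the paper contains no proof of it and does not claim one. What the paper actually proves is the preservation lemma (Lemma~\ref{Lemma:main}): if $Q$ satisfies the GPC then $P\circ_i Q$ does. Your proposal is therefore not comparable to any proof in the paper, and you yourself acknowledge in the final paragraph that the argument ``stalls precisely at prime posets'' and that the paper ``settles for the preservation statement.'' That is an accurate reading, but it means what you have written is a sketch of why the problem is hard, not a proof.

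Two concrete gaps beyond the one you already flag. First, your list of base cases conflates the $1/3$--$2/3$ conjecture with the GPC: height-$2$ posets, $5$-thin posets, series-parallel posets, Young diagrams, and cover-forests are known only for the $1/3$--$2/3$ conjecture, not for the GPC. The paper is explicit that the GPC is established only for width-$2$ posets, semiorders, posets with at most $11$ elements, and $6$-thin posets. So your induction base is thinner than you assume. Second, in the decomposable case you invoke the inductive hypothesis on $Q$, but if every $Q_j$ in a nontrivial decomposition $P=R(Q_1,\dots,Q_n)$ happens to be a chain, none of them satisfies the GPC (chains are excluded from its hypothesis), and Lemma~\ref{Lemma:main} gives you nothing; the paper does \emph{not} prove that GPC transfers from the outer poset $R$ to $R(Q_1,\dots,Q_n)$, and indeed the counterexample following Corollary~\ref{delta} shows $\delta$ can drop in that direction. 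Your ``treat separately'' parenthetical does not address this.
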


 Let $C(P)$ be the number of comparisons required and sufficient to sort $P$, and consider $\phi=\frac{1+\sqrt{5}}{2}$ the golden ratio. If a poset $P$ satisfies the GPC, then $C(P)\leq {\log_\phi{e(P)}}$~\cite{gold}. The name of the conjecture follows from the role of the gold ratio $\phi=\frac{1+\sqrt{5}}{2}$ in the bound.

 The GPC is known to hold for posets of width 2, semiorders, posets containing at most 11 elements~\cite{gold} and 6 thin posets~\cite{6thin}.

\begin{definition} Given posets $(Q_1,<_{Q_1}),\cdots,(Q_n,<_{Q_n}), P=(\{p_1,\cdots,p_n\},<_P)$,
the lexicographic sum $P(Q_1,\cdots,Q_n)$ is defined by the set $\sqcup Q_i$ with order \[x<_{P(Q_1,\cdots,Q_n)}y \hbox{ if }
\begin{cases}
x,y\in Q_j &\hbox{ and } x<_{Q_j}y,\\
\hbox{or } x\in Q_i, y\in Q_j &\hbox{ and }p_i<_{P}p_j.
\end{cases}\]

If all $Q_j$ are one point, except $Q_i=Q$, then we denote the lexicographic sum by $P\circ_i Q$. 
\end{definition}

\begin{definition}
A Hasse diagram of a poset $P$ is a graph in which we draw an edge from a vertex $a\in P$ to a vertex $b\in P$, where $b$ is situated above $a$, if $b$ is a successor of $a$ under $<_P$.    
\end{definition}

For example, a Hasse diagram for $\{x<y<z,x<w\}$ is \pcauset{1,4,2,3}. The Hasse diagram of $P\circ_iQ$ can be drawn by replacing the point $i$ of $P$ by the poset $Q$, and adding those edges from the antecessors of $i$ to the minimum elements of $Q$, and from the maximum elements of $Q$ to the successors of $i$.

 The lexicographic sum has been studied extensively~\cite{lex,dim, notation, dt}. The lexicographic sum assigns a poset $P$ to an endomorphism of posets, see also~\cite{shuffleseries, operad} where the theory of operads provide a mechanism to formalize the study of sets $A$ where  posets $\subset End_A$.

\begin{definition}
The lexicographic sum $P(Q_1,\cdots,Q_n)$ is said to be trivial if $P=\langle 1 \rangle$ or if all $Q_i= \langle 1 \rangle$. A poset is called decomposable if it admits at least one non trivial lexicographic sum. Otherwise the poset is called indecomposable.  
\end{definition}
 For example, given a set $X$ the poset $2^X\setminus\{\varnothing, X\}$ is an indecomposable poset with non trivial automorphisms~\cite{notation}.

Our main contribution, 
which we prove in Lemma~\ref{Lemma:main}, shows that the GPC is preserved under lexicographic sum:

\begin{lemma*}
If $Q$ is a finite poset that satisfies the GPC, and $P$ is a finite poset, then for every $i\in P,$ the lexicographic sum $P\circ_iQ$ satisfies the GPC.
\end{lemma*}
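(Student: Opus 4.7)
The plan is to lift the pair of comparisons that witness the GPC for $Q$ directly to the larger poset $R:=P\circ_i Q$. Everything hinges on a single multiplicative identity for linearization counts: once one shows that $e(R)=c(P,i,m)\,e(Q)$ for a constant $c(P,i,m)$ independent of the internal order of $Q$, any linear inequality among linearization counts witnessing the GPC for $Q$ transfers verbatim to $R$.

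The first step is to establish this factorisation. Set $m=|Q|$ and consider the restriction map $\pi\colon L(R)\to L(Q)$ sending a linearization of $R$ to the induced linear order on the embedded copy of $Q$. I would show every fibre has the same cardinality by an explicit bijection: given $\sigma_Q,\sigma_Q'\in L(Q)$ and $\sigma\in\pi^{-1}(\sigma_Q)$, leave every position of $\sigma$ occupied by a point of $P\setminus\{i\}$ fixed and refill the $m$ positions occupied by $Q$-points in the order dictated by $\sigma_Q'$. Because each element of $Q$ carries the \emph{same} comparability in $R$ with every point of $P\setminus\{i\}$ (namely the comparability of $i$ with that point in $P$), the refilling cannot violate any cross-relation; the new $Q$-restriction is $\sigma_Q'$ by construction and the $(P\setminus\{i\})$-restriction is unchanged. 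Summing over $\sigma_Q\in L(Q)$ then yields $e(R)=c(P,i,m)\,e(Q)$.

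With the factorisation available, invoke the GPC for $Q$ to obtain two consecutive comparisons on incomparable pairs of $Q$ such that in every outcome branch the resulting posets $Q_1,Q_2$ satisfy $e(Q)>e(Q_1)+e(Q_2)$. Because $Q$ sits in $R$ as an induced subposet, the same pairs remain incomparable in $R$ and constitute legitimate comparisons there. A short verification shows that appending an outcome $q<q'$ with $q,q'\in Q$ to the order on $R$ produces exactly $P\circ_i Q'$, where $Q'$ is $Q$ with that relation appended: the transitive closure of the new edge cannot escape $Q$, since any cross-path through $P\setminus\{i\}$ is already forced by the lexicographic structure. Hence the two branches of $R$ after the consecutive comparisons are $P\circ_i Q_1$ and $P\circ_i Q_2$, and the factorisation gives
\[
e(R)-e(P\circ_i Q_1)-e(P\circ_i Q_2) \;=\; c(P,i,m)\bigl(e(Q)-e(Q_1)-e(Q_2)\bigr)\;>\;0,
\]
which is exactly the GPC inequality for $R$ witnessed by the lifted pair.

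The principal obstacle, and the step needing real care, is the fibre bijection underlying the factorisation: one must verify that relabelling the $Q$-positions preserves every relation across the interface between $Q$ and $P\setminus\{i\}$. This is precisely the place where the lexicographic hypothesis — uniform comparability between every element of $Q$ and each outside point — is indispensable, and it is the only step where a mis-step would actually break the argument. The corner case in which $Q$ is a chain makes the GPC hypothesis vacuous; there the substantive content is empty, and one only needs to note that whenever $R$ itself fails to be a chain the argument above still applies under any non-trivial witnessing pair in $Q$.
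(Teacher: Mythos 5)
Your proposal is correct and follows essentially the same route as the paper: the factorisation $e(P\circ_i Q)=c\,e(Q)$ with $c$ independent of the internal order of $Q$ is exactly the paper's table of linearizations with columns indexed by $L(Q)$ and constant row count $k$ (Lemma~\ref{Lemma:loctoprod}), and lifting the two witnessing comparisons from $Q$ then multiplies $t_0,t_1,t_2$ by the same constant, giving the GPC inequality for $P\circ_i Q$. Your write-up is in fact more explicit than the paper's (verifying that each outcome branch is literally $P\circ_i Q'$ and that the constant is unchanged), but it is the same argument; only the final remark about the chain corner case is muddled, since a chain $Q$ admits no witnessing pair, and there, as in the paper, one must simply read the hypothesis as excluding chains.
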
 

Then, if $Q_i$ satisfies the GPC, and $P,\{Q_j\}_{j\neq i}$ are finite posets, it follows that \[P(Q_1,\cdots,Q_n)=P(Q_1,\cdots,Q_{i-1},\langle 1 \rangle,Q_{i+1},\cdots,Q_n)\circ_i Q_i\] will satisfy the same conjecture.

\section{The lexicographic sum}

We identify the set of linear orders on a poset $P$ with the set of order preserving, bijective labeling maps: $f:P\rightarrow [1,|P|].$

\subsection{Locality}
\label{Sec:main}

Let $P$ be a finite poset and $i\in P$. From the point of view of $i$, there are three types of points in $P$: the set $P_{>i}$ containing those points that are bigger than $i$, the set $P_{<i}$ containing those points that are smaller than $i$ and the set $P_{|| i}$ consisting of points incomparable to $i$.

\begin{remark}[Locality of lexicographic sum]\label{rm}
Note that in any linearization $f$ of $P\circ_i Q$, for all $q\in Q$ we have $f(j)>f(q)$ if $j\in P_{>i}$ and $f(k)<f(q)$ if $k\in P_{<i}$.
\end{remark}

To understand how a linearization $f$ of $P\circ_i Q$ maps the points of $P_{|| i}$ we consider two pieces of information: the poset structure of $P$, and the possible image of $Q$ under $f$. 

\begin{definition}
Given  $f\in L(P\circ_i Q)$,  we denote by $f|_Q$ the linear order of $Q$ defined by $q_1<_{f|_Q}q_2$ iff $f(q_1)<f(q_2)$.
    
\end{definition}

We now show that it is possible to index the linearizations of $P\circ_i Q$ with columns $L(Q)$ and rows $A$ for some non unique set $A$. 

\begin{lemma}\label{Lemma:loctoprod}
If we fix $g\in L(Q)$ and let $R_g=\{f\in L(P\circ_i Q)\colon g=f|_Q\}$. Then, we have the set isomorphism $L(P\circ_i Q)\simeq R_g\times L(Q)$.
More over, 
$(\prod e(Q_i))\,$ divides $\,e(P( Q_1,\cdots,Q_{|P|}))$. 
\end{lemma}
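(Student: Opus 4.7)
The plan is to build an explicit bijection $\Phi \colon R_g \times L(Q) \to L(P \circ_i Q)$ that preserves the positions of $P \setminus \{i\}$ and merely re-sorts the elements of $Q$ among the slots they occupy. Given $(f, g') \in R_g \times L(Q)$, let $S = f(Q) = \{s_1 < \cdots < s_{|Q|}\}$ be the set of positions $f$ assigns to $Q$; if $g'$ enumerates $Q$ as $q'_1 <_{g'} \cdots <_{g'} q'_{|Q|}$, I define $\Phi(f, g')$ to agree with $f$ on $P \setminus \{i\}$ and to send $q'_k \mapsto s_k$. Four checks verify that $\Phi(f, g')$ is a linearization: the restriction to $P \setminus \{i\}$ is inherited from $f$; the restriction to $Q$ equals $g' \in L(Q)$ by construction; the cross-compatibility between $Q$ and $P_{<i} \cup P_{>i}$ follows from Remark~\ref{rm}, since the $f$-values on $P_{<i}$ all lie below $s_1$ and those on $P_{>i}$ above $s_{|Q|}$, regardless of how the $q'_k$ are assigned to the slots $s_k$; and there is nothing to check against $P_{||i}$. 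The inverse $\Psi$ sends $f \in L(P \circ_i Q)$ to $(h, f|_Q)$, where $h$ is produced by the same swap using $g$ in place of $f|_Q$; the two compositions are identities by inspection.

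For the divisibility claim, I would iterate the set isomorphism. Writing
\[ P(Q_1, \ldots, Q_n) = \bigl(\cdots\bigl((P \circ_{p_1} Q_1) \circ_{p_2} Q_2\bigr) \cdots \bigr) \circ_{p_n} Q_n, \]
and applying the isomorphism once for each $Q_i$, gives
\[ e(P(Q_1,\ldots,Q_n)) = |T_{g_1,\ldots,g_n}| \cdot \prod_{i=1}^{n} e(Q_i), \]
where $T_{g_1,\ldots,g_n}$ is the set of linearizations $f$ of $P(Q_1,\ldots,Q_n)$ with $f|_{Q_i} = g_i$ fixed for every $i$. Equivalently, fixing every $Q_i$-order amounts to replacing each $Q_i$ by the chain $\langle |Q_i| \rangle$, so $|T_{g_1,\ldots,g_n}| = e(P(\langle |Q_1| \rangle, \ldots, \langle |Q_n| \rangle))$, which is a positive integer; the divisibility follows.

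The main technical nuisance I expect is the bookkeeping in the iteration: after the first application the ambient poset is already $P(\langle 1 \rangle, Q_2, \ldots, Q_n)$, and one must keep track of the relevant module to peel off at each subsequent stage without losing track of the previously fixed reference orders. The chain-substitution reformulation sidesteps this by fixing all reference orders $g_i$ simultaneously and invoking the single-step isomorphism globally, trading the recursive bookkeeping for a single identification of linearizations with $f|_{Q_i}=g_i$ with linearizations of the chain-substituted sum.
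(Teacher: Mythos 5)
Your proof is correct and follows essentially the same route as the paper: your slot-reassignment bijection $\Phi$ is exactly the paper's $f\circ_i h$ construction (send the $k$-th element of $Q$ in the new order to the $k$-th position of $f(Q)$, keep $P\setminus\{i\}$ fixed), justified by the same locality observation of Remark~\ref{rm}. For the divisibility claim you merely spell out, via the chain-substitution identification of the row count, what the paper compresses into the remark that the linearizations can be tabulated with columns indexed by $\prod_{i} L(Q_i)$.
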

\begin{proof}
The main idea is to decompose linearizations of $P\circ_i Q$ in two parts, the part acting on $Q$ and the part acting on $P\setminus \{i\}$. We then construct rows indexed by the possible images of $P\setminus \{i\}$.

 For every $f\in R_g$, consider the image of $Q$ under $f: \{a_1,\cdots,a_l \}\subset [1,l+m-1]$, where $l=|Q|, m=|P|$. Then, given $h\in L(Q)$ consider $\sigma$ the isomorphism between $g(Q)$ and $h(Q)$. We define a linearization $f\circ_i h$ of $P\circ_i Q$ by sending $q\in Q$ to $a_{\sigma(q)}$ and every $p\in P\setminus{\{i\}}$ to $f\circ_i h(p) := f(p)$. Denote by $f\times L(Q)$ the set of such linearizations constructed with elements from $L(Q)$. 
 Then $\sqcup_{f\in R_g} f\times L(Q)\subset L(P\circ_i Q)$. 
 
 Let $f\in L(P\circ_i Q)$, and let $\sigma\in S_m$ be the permutation sending $f|_Q$ to $g$. Then $f\circ_i (\sigma\circ f|_Q)\in R_g$  and applying the isomorphism $\sigma^{-1}$ between $g(Q)$ and $f(Q)$ we conclude that $[f\circ_i (\sigma\circ f|_Q)]\circ_i f|_Q=f\in f\circ_i (\sigma\circ f|_Q)\times L(Q)$. Then $ L(P\circ_i Q)\subset \sqcup_{f\in R_g} f\times L(Q)$.

It follows that $e(Q)|e(P\circ_i Q)$.
To prove the second part, note that we can organize the linearizations of $P(Q_1,\cdots,Q_n)$ on a table in which the columns are indexed by $\prod_{i=1}^n L(Q_i)$.

 
\end{proof}

\begin{example}\label{ex:useful}

Let $N=\{w<y>x<z\}$. Consider $N\circ_w(\{r\}\sqcup \{t<u\})$. We describe explicitly the linearizations of this poset in 
Table~\ref{table:3}. 

\begin{table}
\begin{tabular}{| c |c| c| c| c| c|| c |c| c| c| c| c|| c |c| c| c| c| c|}
\hline
\multicolumn{18}{|c|}{$L(N\circ_w(\{r\}\sqcup \{t<u\}))$} \\
\hline
\multicolumn{6}{|c||}{$\{r<t<u\}$}&
\multicolumn{6}{|c||}{$\{t<r<u\}$}&
\multicolumn{6}{|c|}{$\{t<u<r\}$}\\
\hline
\hline
 r & t & u& x & y & z  &
 r & t & u& x & y & z  &
 r & t & u& x & y & z  \\
 
\hline
\hline
\hline
 1 & 2 & 3 & 4 & 5 & 6 &
 2 & 1 & 3 & 4 & 5 & 6 &
 3 & 1 & 2 & 4 & 5 & 6 \\  
\hline
\hline
 1 & 2 & 3 & 4 & 6 & 5 &
 2 & 1 & 3 & 4 & 6 & 5 &
 3 & 1 & 2 & 4 & 6 & 5 \\
\hline 
\hline
 1 & 2 & 4 & 3 & 5 & 6 &
 2 & 1 & 4 & 3 & 5 & 6 &
 4 & 1 & 2 & 3 & 5 & 6 \\  
\hline
\hline

 1 & 2 & 4 & 3 & 6 & 5 &
 2 & 1 & 4 & 3 & 6 & 5 &
 4 & 1 & 2 & 3 & 6 & 5 \\  
\hline
\hline
 1 & 3 & 4 & 2 & 5 & 6 &
 3 & 1 & 4 & 2 & 5 & 6 &
 4 & 1 & 3 & 2 & 5 & 6 \\  
\hline
\hline

 1 & 3 & 4 & 2 & 6 & 5 &
 3 & 1 & 4 & 2 & 6 & 5 &
 4 & 1 & 3 & 2 & 6 & 5 \\  
 \hline
\hline
 2 & 3 & 4 & 1 & 5 & 6 &
 3 & 2 & 4 & 1 & 5 & 6 &
 4 & 2 & 3 & 1 & 5 & 6 \\  
\hline
\hline

 2 & 3 & 4 & 1 & 6 & 5 &
 3 & 2 & 4 & 1 & 6 & 5 &
 4 & 2 & 3 & 1 & 6 & 5\\
 
\hline
\hline
 1 & 2 & 5 & 3 & 6 & 4 &
 2 & 1 & 5 & 3 & 6 & 4 &
 5 & 1 & 2 & 3 & 6 & 4 \\  
\hline
\hline
 1 & 3 & 5 & 2 & 6 & 4 &
 3 & 1 & 5 & 2 & 6 & 4 &
 5 & 1 & 3 & 2 & 6 & 4 \\  
\hline
\hline
 2 & 3 & 5 & 1 & 6 & 4 &
 3 & 2 & 5 & 1 & 6 & 4 &
 5 & 2 & 3 & 1 & 6 & 4 \\  
 
\hline
\hline
 1 & 4 & 5 & 2 & 6 & 3 &
 4 & 1 & 5 & 2 & 6 & 3 &
 5 & 1 & 4 & 2 & 6 & 3 \\  
\hline
\hline
 2 & 4 & 5 & 1 & 6 & 3 &
 4 & 2 & 5 & 1 & 6 & 3 &
 5 & 2 & 4 & 1 & 6 & 3 \\  

\hline
\hline
 3 & 4 & 5 & 1 & 6 & 2 &
 4 & 3 & 5 & 1 & 6 & 2 &
 5 & 3 & 4 & 1 & 6 & 2 \\
\hline
\end{tabular}
\caption{Table containing every element of $L(N\circ_w(\{r\}\sqcup \{t<u\}))$. Each linearization is described by showing the values of each variable. Columns are indexed by elements of $L(\{r\}\sqcup \{t<u\})$.\label{table:3}}
\end{table}
\end{example}



\begin{lemma} If the poset $Q$ satisfies the GPC, then for any poset $P$ and $i\in P$ the poset $P\circ_i Q$ satisfies the GPC.\label{Lemma:main}
\end{lemma}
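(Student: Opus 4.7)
The plan is to use Lemma~\ref{Lemma:loctoprod} to transfer the two comparisons witnessing the GPC for $Q$ up to the lexicographic sum $P\circ_i Q$, exploiting that $e(P\circ_i Q)$ factors multiplicatively through $e(Q)$ with a factor that depends only on $P$, $i$ and $|Q|$ and not on the internal relations of $Q$.

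First, assuming $Q$ is not a chain (the remaining case being either vacuous or reducing $P\circ_i Q$ to $P$ itself), the GPC hypothesis for $Q$ supplies two consecutive comparisons $c_1 = (q_1, q_2)$ and $c_2 = (q_3, q_4)$ between pairs of incomparable elements of $Q$ such that, for every combination of outcomes, $e(Q) > e(Q_{c_1}) + e(Q_{c_1, c_2})$, where $Q_{c_1}$ and $Q_{c_1, c_2}$ denote $Q$ with the corresponding strict relations added. Since the $q_j$ are incomparable in $Q$ iff they are in $P\circ_i Q$, and by Remark~\ref{rm} the posets obtained after each outcome in $P\circ_i Q$ are precisely $P\circ_i Q_{c_1}$ and $P\circ_i Q_{c_1, c_2}$ (the $P$-structure and the locality constraints are untouched), these same comparisons are legal consecutive comparisons in $P\circ_i Q$.

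Next, I would establish that $e(P\circ_i Q') = \alpha\cdot e(Q')$ for every poset $Q'$ on the same ground set as $Q$, with a common multiplier $\alpha$ that depends only on $P$, $i$, and $|Q|$. Lemma~\ref{Lemma:loctoprod} gives $|R_g|\cdot e(Q') = e(P\circ_i Q')$ for every $g\in L(Q')$, so $|R_g|$ is independent of $g$. What remains is that, for any fixed $g$, a linearization in $R_g$ is determined by an interleaving of the points of $P\setminus\{i\}$ with the chain that $g$ imposes on the $|Q|$ elements of $Q$, respecting the order of $P$ and the constraints of Remark~\ref{rm}; this count is insensitive to any further internal relations among the elements of $Q$, and hence depends only on $P$, $i$, and $|Q|$. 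In particular the same $\alpha$ serves for $Q$, $Q_{c_1}$, and $Q_{c_1, c_2}$.

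Finally, multiplying the GPC inequality for $Q$ through by $\alpha$ gives $e(P\circ_i Q) > e(P\circ_i Q_{c_1}) + e(P\circ_i Q_{c_1, c_2})$, which is exactly the GPC inequality for $P\circ_i Q$ with the comparisons $c_1, c_2$. The main obstacle is the invariance of $\alpha$ under adding relations to $Q$: Lemma~\ref{Lemma:loctoprod} by itself only delivers $g$-invariance, and one must look inside its proof to extract the stronger combinatorial statement that $|R_g|$ is really a count of interleavings of $P\setminus\{i\}$ with a labelled chain of length $|Q|$, a quantity that does not see the internal order of $Q$. Once that invariance is in hand, everything else is bookkeeping.
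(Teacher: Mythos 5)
Your proposal is correct and follows essentially the same route as the paper: reuse the two GPC comparisons from $Q$ inside $P\circ_i Q$ and multiply the inequality by the common row count $k=|R_g|$ coming from the table decomposition of Lemma~\ref{Lemma:loctoprod}. You additionally spell out the point the paper leaves implicit, namely that this multiplier is unchanged when relations are added to $Q$ (since for $g\in L(Q_{c_1})\subseteq L(Q)$ the set $R_g$ is literally the same, being a count of interleavings of $P\setminus\{i\}$ with the chain $g$), which is a welcome clarification rather than a deviation.
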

\begin{proof} 
    If $Q$ satisfies the GPC then fix be the comparisons added to $Q$ with $t_0=e(Q), t_1, t_2$  $t_0>t_1+t_2$. Then, we use the same two comparisons of $Q$ on $P\circ_i Q$ to obtain $t^\prime_{0},t^\prime_{1},t^\prime_{2}.$
    If we arrange the linearizations of $P\circ_i Q$ using the linearizations of $Q$, and we denote by $k$ be the height of the table, then $t^\prime_{0}=kt_{0},t^\prime_{1}=kt_{1},t^\prime_{2}=kt_{2}$ and $t^\prime_0>t^\prime_1+t^\prime_2$. 
\end{proof}

 \begin{example}\label{Ex:counter}The following poset has width 8, it is not a semiorder since it contains $\pcauset{4,1,2,3}$, and the poset has 19 elements: 
     $\pcauset{1,15, 13,17,18,16,12,10,14,8,11,9,6,7,0,5,2,3,4}$.
     Since the poset can be written as a lexicographic sum on the green vertex $\pcauset[subset A/.style={green}]{1,15, 13,16/subset A,12,10,14,8,11,9,6,7,0,5,2,3,4}\circ\pcauset{3,1,2}$ and the poset $\pcauset{3,1,2}$ satisfy the GPC, then so does the original poset. Note that this poset does not belong to the classes of posets known in the current literature to satisfy the GPC. 
 \end{example}

    Let $\delta(P)=\max_{x,y\in P}\min\{\mathbb{P}(x<y),\mathbb{P}(y<x)\}$, where $\mathbb{P}(x<y)=\frac{|\{f\in L(P)|x<_f y\}|}{e(P)}$ is the probability that a linearization $f$ of the poset $P$ will satisfy $f(x)<f(y)$. On~\cite{w2}, the corresponding authors show that $\delta(P\oplus Q):=\delta(\{x<y\}(P,Q))= \max\{\delta(P),\delta(Q)\}$, and $\delta(P\sqcup Q):=\delta(\{x,y\}(P,Q))\geq \max\{\delta(P),\delta(Q)\}$.
    
    \begin{corollary}\label{delta} Let $P,Q_1,\cdots,Q_n$ be finite posets. Then $\delta(P(Q_1,\cdots,Q_n))\geq \max_{1\leq i\leq n}\{ \delta(Q_i)\}$.     
    \end{corollary}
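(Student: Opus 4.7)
The plan is to produce a witness pair of points inside whichever component $Q_i$ attains the maximum on the right-hand side, and then to exploit the locality of the lexicographic sum (Remark \ref{rm} and Lemma \ref{Lemma:loctoprod}) to show that the two probabilities associated to this pair are unchanged when we pass from $Q_i$ to $P(Q_1,\ldots,Q_n)$.

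Concretely, I would first fix an index $i$ with $\delta(Q_i) = \max_j \delta(Q_j)$ and a pair $x, y \in Q_i$ realising $\delta(Q_i) = \min\{\mathbb{P}_{Q_i}(x<y), \mathbb{P}_{Q_i}(y<x)\}$. The core step is the identity
\[\mathbb{P}_{P(Q_1,\ldots,Q_n)}(x<y) = \mathbb{P}_{Q_i}(x<y),\]
together with the same identity after swapping $x$ and $y$. To prove it, I would note that because both $x$ and $y$ lie in $Q_i$, for any linearization $f$ of $P(Q_1,\ldots,Q_n)$ the event $x <_f y$ holds if and only if $x <_{f|_{Q_i}} y$. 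Combined with the fact, extracted from Lemma \ref{Lemma:loctoprod}, that the projection $f \mapsto f|_{Q_i}$ has fibers of common cardinality $k = e(P(Q_1,\ldots,Q_n))/e(Q_i)$, the count of $f$ with $x <_f y$ equals $k \cdot |\{g \in L(Q_i) \colon x <_g y\}|$, so the ratios match.

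Once the identity is available, the conclusion is immediate: taking minima over the two orderings gives $\min\{\mathbb{P}_{P(Q_1,\ldots,Q_n)}(x<y), \mathbb{P}_{P(Q_1,\ldots,Q_n)}(y<x)\} = \delta(Q_i)$, which is by definition a lower bound on $\delta(P(Q_1,\ldots,Q_n))$; maximising over $i$ yields the claimed inequality. The one minor obstacle I foresee is justifying the fiber-uniformity at the level of the general lexicographic sum $P(Q_1,\ldots,Q_n)$ rather than only the binary $P \circ_i Q$ treated in Lemma \ref{Lemma:loctoprod}; I would handle it either by iterating the binary case, writing $P(Q_1,\ldots,Q_n)$ as a sequence of $\circ_{p_j} Q_j$ substitutions, or by appealing directly to the ``More over'' clause of that lemma.
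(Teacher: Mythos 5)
Your proposal is correct and follows essentially the same route as the paper: both arguments fix a witness pair inside the extremal $Q_i$ and use the table/fiber-uniformity structure of Lemma~\ref{Lemma:loctoprod} (as in Lemma~\ref{Lemma:main}) to show the relevant probability is unchanged in $P(Q_1,\cdots,Q_n)$. Your explicit handling of the minimum over the two orderings and of the passage from the binary case to the general lexicographic sum is a slightly more careful write-up of exactly what the paper does.
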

\begin{proof}
    We construct a table whose columns are linearizations of $Q_1$.
    Then if $x_1,y_1\in Q_1$ are such that $\delta(Q_1)=\mathbb{P}(x_1<_{Q_1}y_1)$, then, as in the proof of Lemma~\ref{Lemma:main}, the ratio of linearizations in $P(Q_1,\cdots,Q_n)$ where $f(x_1)<f(y_1)$ is the same as in $Q_1$, that is:
    $$\delta(P(Q_1,\cdots,Q_n))\geq \mathbb{P}(x_1<_{P(Q_1,\cdots,Q_n)}y_1)=\mathbb{P}(x_1<_{Q_1}y_1)=\delta(Q_1).$$
    Note that this argument works for an arbitrary $Q_i$.
\end{proof}

\begin{remark}
The paper \cite{diagrams} wonders if given a poset $P$, we can find a poset $Q$ with smaller width and $\delta(Q)<\delta(P)$. Let $P$ be a poset with non trivial lexicographic sum $P=R(Q_1,\cdots,Q_s)$, and assume we have $\delta(Q_i)<\delta(Q_j)$ for some $i,j$ non linear posets, and width$(\delta(Q_i))<$ width$(P)$. Then by Corollary~\ref{delta} $\delta(Q_i)<\delta(P)$.    
\end{remark}

    There is a well established concept of dimension of a poset~\cite{dt}, and Hiraguchi's relation says that $dim(P(Q_1,\cdots,Q_n))=\max\{dim(P), dim(Q_1),\cdots,dim(Q_n)\}$. One could wonder if $\delta(P(Q_1,\cdots,Q_n))\geq\delta(P)$, for example $\delta(\pcauset{1,4,2,5,3}\circ_a \pcauset{3,1,2})\geq \delta(\pcauset{1,4,2,5,3})$ for every choice of $a\in \pcauset{1,4,2,5,3}$.

\begin{example}
The following counter example was found by Anderson Vera and the first author:
$$\delta(\pcauset{1,6,3,4,2,5})=7/15<.5=\delta(\pcauset{1,5,3,2,4}).$$   
We describe the construction of the counter example, since the method can be useful for other similar problems.
Let $Q=\chain[n]$.
 We group the linearizations of $P\circ_{x_i}Q$ in the following way.
 To every $f\in L(P)$ consider the points $b = \min\{ f(r)| r\geq_P x_i\}$ and $c=\max\{f(s)|s\leq_P x_i\}$.
 Then in $P$ there are some points $t_1,\cdots,t_k$ such that $$c\leq f(t_1)\leq\cdots\leq f(x_i)\leq \cdots\leq f(t_k)\leq b.$$
 We define $T_f$ to contain  all the $k+1$ linearizations of $P$ determined by varying the location of $f(x_i)$ between $c$ and $b$.
 
 On $P\circ_{x_i}Q$, we consider those linearizations whose projection to $P$ is in $T_f$. Here the projection $\tilde{n}$ sends $f(Q)$ to the minimum of $f(Q)$.
 The multiset is defined by  $\multiset{y}{x}:={y+x-1\choose x}$.
 Then, $\tilde{n}^{-1}(T_f)$ has $\multiset{k+1}{|Q|}$ elements, since $Q$ is a linear poset. One shows that this procedure partitions the linearizations of $P\circ_{x_i}Q$ using the partition of the linearizations of $P$.

Now, for $x,y\in P\setminus\{ x_i\}$, we first split the linearizations of $P$ into two sets: a) the set of linerizations in which $f(x)<f(y)$, b) the set of linearizations in which  $f(y)<f(x)$.
We then split each set based on the cardinality of the corresponding $T_f$, let $c_i$ be the number of sets of a) in which $T_f$ has cardinality $i+1$. Let $d_i$ be the number of sets in B) in which $T_f$ has cardinality $i+1$. We compute $$\mathbb{P}(x<_{P}y)=\frac{\sum_{i=1}^n c_i(i+1)}{\sum_{i=1}^n (c_i+d_i)(i+1) }$$ and 
 $$\mathbb{P}(x<_{P\circ_{x_i}(Q)}y)=\frac{\sum_{i=1}^n c_i\multiset{i+1}{|Q|}}{\sum_{i=1}^n (c_i+d_i)\multiset{i+1}{|Q|} }.$$ 
By studying the values $c_i,d_i$ one can discover posets with $\delta(P\circ_{x_i} Q)<\delta(P)$. 
\end{example}

\begin{corollary}
    If $Q_i$ satisfies the $\frac{1}{3}-\frac{2}{3}$ conjecture, so does $P(Q_1,\cdots,Q_n)$ for $P$ poset with $n$ points and $Q_1,\cdots,Q_{i-1},Q_{i+1},\cdots Q_n$ arbitrary posets. 

\end{corollary}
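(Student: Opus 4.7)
The plan is to deduce this immediately from Corollary~\ref{delta}. First I would record the equivalence that, for any poset $P'$ that is not a chain, the $\tfrac{1}{3}$-$\tfrac{2}{3}$ conjecture holds for $P'$ if and only if $\delta(P') \geq \tfrac{1}{3}$. Indeed, a comparable pair $x,y$ contributes $\min\{\mathbb{P}(x<y),\mathbb{P}(y<x)\}=0$, so the witness pair in either statement must be incomparable; and for an incomparable pair $\mathbb{P}(x<y)+\mathbb{P}(y<x)=1$, so $\min\{\mathbb{P}(x<y),\mathbb{P}(y<x)\}\geq \tfrac{1}{3}$ is equivalent to $\mathbb{P}(x<y)\in[\tfrac{1}{3},\tfrac{2}{3}]$.

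Next, the hypothesis that $Q_i$ satisfies the $\tfrac{1}{3}$-$\tfrac{2}{3}$ conjecture tells me that $Q_i$ is not a chain and $\delta(Q_i)\geq \tfrac{1}{3}$. By Corollary~\ref{delta}, $\delta(P(Q_1,\ldots,Q_n))\geq \delta(Q_i)\geq \tfrac{1}{3}$. To close the argument I would verify that $P(Q_1,\ldots,Q_n)$ is not a chain: since $Q_i$ is not a chain it contains an incomparable pair, and such a pair remains incomparable in $P(Q_1,\ldots,Q_n)$ because the lexicographic sum leaves the internal order of each $Q_j$ unchanged. The equivalence of the first paragraph then delivers the $\tfrac{1}{3}$-$\tfrac{2}{3}$ conjecture for $P(Q_1,\ldots,Q_n)$.

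I do not expect a real obstacle here: the heart of the proof is already carried out in Corollary~\ref{delta}, where the witness pair $(x,y)\in Q_i$ is shown to have the same splitting probability in $P(Q_1,\ldots,Q_n)$ as it does in $Q_i$. The only bookkeeping is translating between the $\delta$ formulation (a maximum over minima) and the interval formulation $[\tfrac{1}{3},\tfrac{2}{3}]$, which is exactly the identity $\mathbb{P}(x<y)+\mathbb{P}(y<x)=1$ for incomparable $x,y$.
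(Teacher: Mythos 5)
Your proposal is correct and takes essentially the same route as the paper, whose proof is simply that the statement follows from Corollary~\ref{delta}; you merely make explicit the translation between $\delta(\cdot)\geq\tfrac{1}{3}$ and the interval formulation, plus the observation that the lexicographic sum is not a chain, both of which the paper leaves implicit.
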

    \begin{proof}
        Follows from Corollary~\ref{delta}.
    \end{proof}

In Example~\ref{ex:useful}, by examining the columns and the values of $r,t$ we compute \[\frac{\#\{f\in L(N\circ_w(\{r\}\sqcup \{t<u\}))|f(t)<f(r)\}}{e(N\circ_w(\{r\}\sqcup \{t<u\}))}= 1/3.\]

\section{Final remarks}

One way to speed up the verification that $P(Q_1,\cdots,Q_n)$ satisfy the GPC is to first test if any non linear order out of $Q_1,\cdots, Q_n$ (which are smaller posets) satisfies the GPC. Another point of view is that if we show that every indecomposable poset satisfy the GPC, it follows that all posets satisfy the GPC.
See~\cite[Section 7.2]{notation} for details about a canonical decomposition of posets as lexicographic sums.

The Lemma~\ref{Lemma:main} increases the families of posets that satisfy the 1/3-2/3 conjecture by including any poset of the form $P\circ_i Q$ with $Q$ in the list of families mentioned on the introduction. The poset in Example~\ref{Ex:counter} satisfies the 1/3-2/3 conjecture and it is not in any of the classes previously known to satisfy the conjecture. 
 From the point of view of sorting, our results are consistent with the use of divide and conquer strategies~\cite{intr} on non trivial lexicographic sums. 
\section{Acknowledgements}
We thank Professor Thomas Trotter and Professor Marcin Peczarski for their suggestions.
\bibliographystyle{abbrv}
\bibliography{biblio}

\end{document}